\theoremstyle{plain}
\newtheorem{thm}{Theorem}
\newtheorem{claim}{Claim}
\numberwithin{equation}{section}
\title{On the Ordinary and Signed\\ G\"ollnitz-Gordon Partitions}
\author{Andrew V. Sills\\
\small{Department of Mathematical Sciences}\\
\small{Georgia Southern University}\
\small{Statesboro, Georgia, USA}\\
\small{\texttt{asills@GeorgiaSouthern.edu}}
}
\date{\small{Version of October 14, 2007}}
\begin{document}

\maketitle

\centerline{\it Dedicated to George Andrews on the occasion of his 70th birthday}
\section{Introduction}
A partition of an integer $n$ is a representation of $n$ as
an unordered sum of positive integers.  
In a recent paper~\cite{A07}, Andrews introduced the notion of a
``signed partition,"  that is, a representation of a positive integer as an
unordered sum of integers, some possibly negative.

  Consider the following $q$-series identity:
\begin{thm}[Ramanujan and Slater] For $|q|<1$,
\begin{equation} \label{RamSlat}
 \sum_{j=0}^\infty \frac{ q^{j^2} (1+q)(1+q^3) \cdots (1+q^{2j-1}) }
{ (1-q^2)(1-q^4)\cdots (1-q^{2j}) }
=\underset{m\equiv 1,4,7\hskip -3mm\pmod{8}}{ \prod_{m\geqq 1}} \frac{1}{1-q^m}.
\end{equation}
\end{thm}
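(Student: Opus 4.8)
The plan is to put \eqref{RamSlat} into standard $q$-series notation, strip the product side down to an infinite product times a single theta series, and then match the left side to that expression with the Bailey chain. Writing $(a;q)_n=\prod_{k=0}^{n-1}(1-aq^k)$ and $(a;q)_\infty=\prod_{k\ge0}(1-aq^k)$, the left side of \eqref{RamSlat} is $\sum_{j\ge0}q^{j^2}(-q;q^2)_j/(q^2;q^2)_j$ and the right side is $[(q;q^8)_\infty(q^4;q^8)_\infty(q^7;q^8)_\infty]^{-1}$, so the assertion is the analytic form of the first G\"ollnitz--Gordon identity. Grouping the factors of $(q;q^2)_\infty$ and $(q^4;q^4)_\infty$ by residue modulo $8$ gives $(q;q^2)_\infty(q^4;q^4)_\infty=(q;q^8)_\infty(q^3;q^8)_\infty(q^4;q^8)_\infty(q^5;q^8)_\infty(q^7;q^8)_\infty(q^8;q^8)_\infty$, hence
\begin{align*}
  \frac{1}{(q;q^8)_\infty(q^4;q^8)_\infty(q^7;q^8)_\infty} &= \frac{(q^3;q^8)_\infty(q^5;q^8)_\infty(q^8;q^8)_\infty}{(q;q^2)_\infty(q^4;q^4)_\infty} \\
  &= \frac{1}{(q;q^2)_\infty(q^4;q^4)_\infty}\sum_{n=-\infty}^{\infty}(-1)^n q^{4n^2-n},
\end{align*}
the last equality being Jacobi's triple product $(x;q)_\infty(q/x;q)_\infty(q;q)_\infty=\sum_{n=-\infty}^{\infty}(-1)^n x^n q^{\binom{n}{2}}$ with base $q^8$ and $x=q^3$. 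It therefore suffices to prove
\[
  \sum_{j\ge0}\frac{q^{j^2}(-q;q^2)_j}{(q^2;q^2)_j}=\frac{1}{(q;q^2)_\infty(q^4;q^4)_\infty}\sum_{n=-\infty}^{\infty}(-1)^n q^{4n^2-n}.
\]

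For this I would use the Bailey chain together with a change of base. Because the summand carries the base-$q^2$ shifted factorial $(-q;q^2)_j$ while the product is of modulus $8$, a change of base $q\mapsto q^2$ is built in: the idea is to begin from a Bailey pair whose $\alpha_n$ is a closed-form theta coefficient, apply the classical quadratic base-changing step (precisely the mechanism that produces $(-q;q^2)$-type factors), and then feed the resulting pair through the limiting case of Bailey's lemma. The net effect should rewrite $\sum_{j\ge0}q^{j^2}(-q;q^2)_j/(q^2;q^2)_j$ as a fixed infinite product times a single theta series, and Jacobi's triple product then evaluates that theta series to recover the right-hand side of the displayed identity. Equivalently, the whole statement can be read off as a limiting, parameter-specialized case ($N\to\infty$, with $q$ replaced throughout by $q^2$) of Watson's $q$-analogue of Whipple's theorem, i.e.\ the ${}_8\phi_7$ transformation.

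The step I expect to be the real obstacle is producing that Bailey pair in closed form, with the base change threaded correctly. The naive attempt --- a Bailey pair relative to $a=1$ with ordinary base $q$ and $\beta_n=(-q;q^2)_n/(q^2;q^2)_n$ --- does not help: Bailey inversion then gives $\alpha_1=-q-q^2$, $\alpha_2=-q^2+q^3+q^4+q^5-q^6+q^8$, and in general a sequence with no closed-form theta structure, so the base change is genuinely needed and the bookkeeping of shifted bases and specialized parameters is the delicate part; once the pair is in hand, the rest is automatic. Should that prove stubborn, there is a self-contained fallback through $q$-difference equations. Put $F(z)=\sum_{j\ge0}z^j q^{j^2}(-q;q^2)_j/(q^2;q^2)_j$; subtracting $F(zq^2)$, using $(1-q^{2j})/(q^2;q^2)_j=1/(q^2;q^2)_{j-1}$, shifting the summation index, and splitting $1+q^{2i+1}=1+q\cdot q^{2i}$ yields
\[
  F(z)=(1+zq)\,F(zq^2)+zq^2\,F(zq^4),\qquad F(0)=1,
\]
which pins $F$ down uniquely as a formal power series in $z$ (the coefficient $f_n$ of $z^n$ satisfies $f_n(1-q^{2n})=q^{2n-1}(1+q^{2n-1})f_{n-1}$, recovering the summand). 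One would then produce a two-variable refinement $G(z)$ of the product side with $G(0)=1$ and $G(1)$ equal to the product, show it satisfies the same functional equation, and conclude by uniqueness; this still rests on the triple-product evaluation above but uses no Bailey pair. A purely bijective proof --- directly matching partitions into parts $\equiv 1,4,7\pmod{8}$ with partitions obeying the G\"ollnitz--Gordon gap conditions --- would be the most satisfying, but nothing shorter than the recurrence-based arguments is currently known.
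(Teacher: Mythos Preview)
The paper does not give its own proof of this theorem. It records the identity, attributes it to Ramanujan's lost notebook, and cites Slater~[Eq.~(36)] for the first published proof; nothing further is argued. So there is no argument in the paper to compare your proposal against.

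As for the proposal itself: your product-side reduction is correct --- the factorizations of $(q;q^2)_\infty$ and $(q^4;q^4)_\infty$ modulo $8$ and the triple-product evaluation with base $q^8$, $x=q^3$ are fine, and the resulting target
\[
\sum_{j\ge0}\frac{q^{j^2}(-q;q^2)_j}{(q^2;q^2)_j}
=\frac{(-q;q^2)_\infty}{(q^2;q^2)_\infty}\sum_{n=-\infty}^{\infty}(-1)^n q^{4n^2-n}
\]
is exactly the shape Slater's Bailey-pair argument produces. The functional equation $F(z)=(1+zq)F(zq^2)+zq^2F(zq^4)$ you derive is also correct. But what you have written is an outline, not a proof: you never exhibit the Bailey pair (you explicitly flag this as the obstacle and note that the naive choice fails), and in the $q$-difference fallback you never construct the two-variable product-side function $G(z)$ or verify that it obeys the same recurrence. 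Both routes are viable and classical, but as stated the argument has a genuine gap at precisely the point where the work lies.
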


 An identity equivalent to~\eqref{RamSlat} was recorded by Ramanujan
in his lost notebook~\cite[Entry 1.7.11]{AB07}.  The first proof of~\eqref{RamSlat}
was given by Slater~\cite[Eq. (36)]{S52}.

 Identity~\eqref{RamSlat} became well known after B. Gordon~\cite{G65} showed
that it is equivalent to the following partition identity, which had been discovered
independently by H. G\"ollnitz ~\cite{G60}:
\begin{thm}[G\"ollnitz and Gordon] \label{GGthm}
Let $A(n)$ denote the number of partitions of $n$ into parts which are distinct, nonconsecutive
integers where no consecutive even integers appear.  Let $B(n)$ denote the number
of partitions of $n$ into parts congruent to $1$, $4$, or $7$ modulo $8$.  Then 
$A(n)=B(n)$ for all integers $n$.
\end{thm}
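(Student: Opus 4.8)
The plan is to recognize the two sides of identity~\eqref{RamSlat} as the generating functions for $B(n)$ and $A(n)$ respectively, and then let \eqref{RamSlat} do the rest. The product side is the easy one: a partition counted by $B(n)$ uses each admissible part $m\equiv 1,4,7\pmod 8$ with arbitrary multiplicity, so $\sum_{n\ge 0}B(n)q^n=\prod_{\substack{m\ge 1\\ m\equiv 1,4,7\,(\mathrm{mod}\,8)}}(1-q^m)^{-1}$, which is literally the product on the right of \eqref{RamSlat}. All the work lies in proving that $\sum_{n\ge 0}A(n)q^n$ equals the series on the left.

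For that I would first reformulate the conditions defining $A(n)$: a partition counted by $A(n)$ is one $\lambda_1>\lambda_2>\cdots>\lambda_j\ge 1$ with $\lambda_i-\lambda_{i+1}\ge 2$ for every $i$ and, moreover, $\lambda_i-\lambda_{i+1}\ge 3$ whenever $\lambda_i$ is even. (A short check gives the equivalence with ``distinct, nonconsecutive parts, no two consecutive even integers'': once all gaps are $\ge 2$, a pair of consecutive even integers can occur only as two adjacent parts differing by exactly $2$.) Next, classify these partitions by their number of parts $j$ and peel off the minimal admissible partition with $j$ parts, the odd staircase $(2j-1,2j-3,\dots,3,1)$, of weight $j^2$. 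Setting $\nu_i:=\lambda_i-(2j-2i+1)$ yields $\nu_1\ge\nu_2\ge\cdots\ge\nu_j\ge 0$; and since $\lambda_i$ is even exactly when $\nu_i$ is odd, the even-gap condition becomes precisely ``$\nu_i$ odd $\Rightarrow\nu_i>\nu_{i+1}$'', i.e.\ no odd value is repeated among $\nu_1,\dots,\nu_j$. Hence $\sum_n A(n)q^n=\sum_{j\ge 0}q^{j^2}G_j(q)$, where $G_j(q)$ generates the weakly decreasing nonnegative $j$-tuples in which each odd value occurs at most once.

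It then remains to prove
\[
G_j(q)=\frac{(1+q)(1+q^3)\cdots(1+q^{2j-1})}{(1-q^2)(1-q^4)\cdots(1-q^{2j})},
\]
and this generating-function computation is the heart of the matter. I would handle it by writing $\nu_i=2m_i+e_i$ with $e_i\in\{0,1\}$: this splits the data into an unconstrained bit-string $e=(e_1,\dots,e_j)$ together with a weakly decreasing sequence $m_1\ge\cdots\ge m_j\ge 0$ that is forced to drop strictly, $m_i>m_{i+1}$, exactly at those $i$ with $e_{i+1}=1$. A second shift $m_i=\tilde m_i+\#\{i':i<i'\le j,\ e_{i'}=1\}$ turns $m$ into an arbitrary partition $\tilde m$ with at most $j$ parts, and tracking weights gives $|\nu|=2|\tilde m|+\sum_i(2i-1)e_i$, whence
\[
G_j(q)=\Bigl(\sum_{\tilde m}q^{2|\tilde m|}\Bigr)\Bigl(\sum_{e\in\{0,1\}^j}q^{\sum_i(2i-1)e_i}\Bigr)
=\frac{(1+q)(1+q^3)\cdots(1+q^{2j-1})}{(1-q^2)(1-q^4)\cdots(1-q^{2j})}.
\]
Thus $q^{j^2}G_j(q)$ is exactly the $j$-th summand of the series in \eqref{RamSlat}; summing over $j\ge 0$ and invoking \eqref{RamSlat} together with the formula for $\sum_n B(n)q^n$ gives $A(n)=B(n)$ for every $n$. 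The main obstacle is carrying the ``no consecutive even integers'' hypothesis cleanly through these two successive ``subtract-a-step-function'' reductions --- in particular, getting the strictness attached to the index $e_{i+1}$ rather than $e_i$, and verifying that the two weight shifts combine to produce exactly the exponents $2i-1$ in the numerator.
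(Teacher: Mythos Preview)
Your argument is correct. The product side of \eqref{RamSlat} is the generating function for $B(n)$ by standard reasoning, and your two successive shifts cleanly convert an $A(n)$-partition into a pair (bit-string, arbitrary partition in at most $j$ parts) whose weight accounting yields exactly the $j$-th summand. The delicate point you flag---that the strict drop $m_i>m_{i+1}$ is governed by $e_{i+1}$ rather than $e_i$---checks out case by case, and the weight identity $|\nu|=2|\tilde m|+\sum_i(2i-1)e_i$ follows since each $e_{i'}=1$ contributes $2(i'-1)$ from the shift in $m$ plus $1$ from $e_{i'}$ itself.

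As for comparison: the paper does not actually prove Theorem~\ref{GGthm}. It states the result with attribution to G\"ollnitz and Gordon and remarks that Gordon established the equivalence of \eqref{RamSlat} with the partition statement; the paper's own contribution is the bijection in Section~3 between $\mathcal{G}_{n,j}$ and $\mathcal{S}_{n,j}$. So there is no ``paper's proof'' to match against. What you have supplied is precisely the standard route the paper alludes to---Gordon's derivation of the partition identity from the analytic identity---carried out in full detail. In that sense your proposal does not diverge from the paper so much as fill in what the paper leaves to the references.
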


Andrews~\cite[p. 569, Theorem 8]{A07} provided the following alternate combinatorial interpretation
of~\eqref{RamSlat}.
\begin{thm}[Andrews] \label{Andrews}
Let $C(n)$ denote the number of signed partitions of $n$ where
 the negative parts are distinct, odd, and smaller in magnitude
than twice the number of positive parts, and the positive parts are even 
and have magnitude at least twice the number of positive parts.
Let $B(n)$ be as in Theorem~\ref{GGthm}.  Then $C(n)=B(n)$ for all $n$.
\end{thm}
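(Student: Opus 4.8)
The plan is to derive Theorem~\ref{Andrews} from the Ramanujan--Slater identity~\eqref{RamSlat} by recognizing both sides as generating functions. The product on the right of~\eqref{RamSlat} is, directly from the definition of $B(n)$ in Theorem~\ref{GGthm}, equal to $\sum_{n\geqq 0}B(n)q^{n}$. Hence it suffices to show that the left side of~\eqref{RamSlat} equals $\sum_{n\geqq 0}C(n)q^{n}$; equating coefficients of $q^{n}$ in~\eqref{RamSlat} then yields $C(n)=B(n)$.

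To identify the left side, I would classify the signed partitions $\pi$ counted by $C(n)$ according to their number $j$ of positive parts, and show that the $j$-th summand of~\eqref{RamSlat} is precisely their generating function. Fix $j\geqq 0$. Such a $\pi$ is determined by a multiset of $j$ even positive parts, each $\geqq 2j$, together with a set of distinct odd negative parts of magnitude $<2j$, i.e.\ a subset $N$ of the magnitudes $\{1,3,\dots,2j-1\}$. Writing each positive part as $2j+2d_{i}$ with $d_{i}\geqq 0$, the tuple $\delta=(d_{1},\dots,d_{j})$ is an arbitrary partition into at most $j$ parts weighted by $q^{2|\delta|}$, so the positive parts contribute $q^{2j^{2}}/\bigl((1-q^{2})(1-q^{4})\cdots(1-q^{2j})\bigr)$, using that partitions into at most $j$ parts are generated by $\prod_{i=1}^{j}(1-q^{i})^{-1}$. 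For the negative parts, put $S=\{1,3,\dots,2j-1\}\setminus N$; since $1+3+\cdots+(2j-1)=j^{2}$, the total value of the negative parts is $-\sum_{m\in N}m=-j^{2}+\sum_{m\in S}m$, and as $S$ (equivalently $N$) ranges over all subsets of $\{1,3,\dots,2j-1\}$ this produces the factor $q^{-j^{2}}(1+q)(1+q^{3})\cdots(1+q^{2j-1})$. Consequently the signed partitions with exactly $j$ positive parts contribute
\[
 \frac{q^{2j^{2}}}{(1-q^{2})(1-q^{4})\cdots(1-q^{2j})}\cdot q^{-j^{2}}(1+q)(1+q^{3})\cdots(1+q^{2j-1})
 =\frac{q^{j^{2}}(1+q)(1+q^{3})\cdots(1+q^{2j-1})}{(1-q^{2})(1-q^{4})\cdots(1-q^{2j})},
\]
which is exactly the $j$-th term of~\eqref{RamSlat}. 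Summing over $j\geqq 0$ --- noting that every such signed partition has a well-defined finite number of positive parts, and that $j=0$ forces no negative parts and yields only the empty partition of $0$ --- gives $\sum_{n}C(n)q^{n}$ equal to the left side of~\eqref{RamSlat}, completing the argument.

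I expect the only delicate point to be the bookkeeping in the second step: verifying that the conditions ``even and of magnitude at least $2j$'' and ``distinct odd and of magnitude less than $2j$'' correspond precisely to the index sets $\{2,4,\dots,2j\}$ and $\{1,3,\dots,2j-1\}$ appearing in the $j$-th term, and that the complementation $N\leftrightarrow S$ combined with $1+3+\cdots+(2j-1)=j^{2}$ converts the subtracted quantity into the advertised factor. Re-expressing the weight as $n(\pi)=j^{2}+2|\delta|+\sum_{m\in S}m$, which is manifestly a nonnegative integer, dispels any formal-power-series worry about the transient negative powers of $q$: the generating function of the class with $j$ positive parts is then literally $q^{j^{2}}\bigl(\sum_{\delta}q^{2|\delta|}\bigr)\bigl(\sum_{S}q^{\sum_{m\in S}m}\bigr)$. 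As an aside, the same dissection can be recast as an explicit bijection between the signed partitions counted by $C(n)$ and the ordinary G\"ollnitz--Gordon partitions counted by $A(n)$ in Theorem~\ref{GGthm}, but passing through~\eqref{RamSlat} is the most economical route to $C(n)=B(n)$.
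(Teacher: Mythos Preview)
Your proposal is correct and follows essentially the same approach as the paper: both arguments deduce the theorem from~\eqref{RamSlat} by identifying its $j$-th summand with the generating function for signed partitions having exactly $j$ positive parts, the only cosmetic difference being that the paper rewrites that summand as $q^{2j^{2}}(1+q^{-1})\cdots(1+q^{-(2j-1)})/\bigl((1-q^{2})\cdots(1-q^{2j})\bigr)$ and reads off the interpretation directly, whereas your complementation $N\leftrightarrow S$ together with $1+3+\cdots+(2j-1)=j^{2}$ is exactly the same algebraic move expressed combinatorially. Your added remark that $n(\pi)=j^{2}+2|\delta|+\sum_{m\in S}m\geqq 0$ nicely handles the formal-power-series point the paper leaves implicit.
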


\begin{proof}
The result follows immediately after rewriting the left hand side of~\eqref{RamSlat} as
\[  \sum_{j=0}^\infty \frac{ q^{2j^2} (1+q^{-1})(1+q^{-3}) \cdots (1+q^{-(2j-1)}) }
{ (1-q^2)(1-q^4)\cdots (1-q^{2j}) }.\]
See~\cite[p. 569]{A07} for more details.
\end{proof}

  The purpose of this paper is to provide a bijection between the set of ordinary 
G\"ollnitz-Gordon partitions (those enumerated by $A(n)$ in Theorem~\ref{GGthm})
and  Andrews'  ``signed G\"ollnitz-Gordon partitions" enumerated by $C(n)$ in Theorem~\ref{Andrews}.

\section{Definitions and Notations}
A \emph{partition} $\lambda$ of an integer $n$ with $j$ parts is a $j$-tuple
$(\lambda_1, \lambda_2, \dots, \lambda_j)$ where each $\lambda_i\in\mathbb Z$, 
  \[ \lambda_1 \geqq \lambda_2 \geqq \dots \geqq \lambda_j \geqq 1\] and
 \[  \sum_{k=1}^j \lambda_k = n. \]  Each $\lambda_i$ is called a \emph{part} of $\lambda$.
 The \emph{weight} of $\lambda$ is  $n=\sum_{k=1}^j \lambda_k$ and is denoted $|\lambda|$.
 The number of parts in $\lambda$ is also called the \emph{length} of $\lambda$ and is
 denoted $\ell(\lambda)$.
 
 Sometimes it is more convenient to denote a partition by
 \[ \langle 1^{f_1} 2^{f_2} 3^{f_3} \cdots \rangle \]
 meaning that the partition is comprised of $f_1$ ones, $f_2$ twos, $f_3$ threes, etc.
 
 When generalizing the notion of partitions to Andrews' ``signed partitions," i.e.
 partitions where some of the parts are allowed to be negative, it will be
 convenient to segregate the positive parts from the negative parts.  Thus
 we define
a \emph{signed partition} $\sigma$ of an integer $n$ as a pair of (ordinary) partitions
 $(\pi, \nu)$ where $n=|\pi| - |\nu|$.  The parts of $\pi$ are the \emph{positive parts}
 of $\sigma$ and the parts of $\nu$ are the \emph{negative parts} of $\sigma$.
 We may also refer to $\pi$ (resp. $\nu$) as the \emph{positive (resp. negative) subpartition} 
 of $\sigma$.
 
  Let us denote the \emph{parity function} by
 \[ P(k):= \left\{ \begin{array}{ll}
        0 &\mbox{if $k$ is even}\\
        1 &\mbox{if $k$ is odd.}
        \end{array} \right.
  \]

  Let $\mathcal{G}_{n,j}$ denote the set of partitions 
     \[ \gamma = (\gamma_1, \gamma_2, \dots, \gamma_j ) \]
  of weight $n$ and length $j$, where for $1\leqq i\leqq j-1$,
  \begin{align}
     \gamma_i - \gamma_{i+1} & \geqq 2 \\
     \gamma_i - \gamma_{i+1} &> 2 \mbox{ \hskip 3mm    if $\gamma_i$ is even.}
  \end{align}
  Thus $\mathcal{G}_{n,j}$ is the set of those partitions enumerated by $A(n)$ in 
  Theorem~\ref{GGthm} which have length $j$. 
  
  Let $\mathcal{S}_{n,j}$ denote the set of signed partitions $\sigma = (\pi, \nu)$
  of $n$
   such that
    \begin{gather}
       \ell(\pi) = j\\
       \ell(\nu) \leqq j\\
       \pi_i\mbox{  is even for all $i=1,2,\dots, j$}\\
       \pi_i \geqq 2j \mbox{  for all $i=1,2,\dots j$}\\
       \nu_i \mbox{ is odd for all $i=1,2,\dots, \ell(\nu)$}\\
        \nu_i \leqq 2j-1 \mbox{ for all $i=1,2,\dots, \ell(\nu)$}\\
        \nu_i - \nu_{i+1} \geqq 2  \mbox{  for all $i=1,2,\dots, \ell(\nu)-1$},
    \end{gather}
   i.e. the positive subpartition is a partition into $j$ even parts, all at least $2j$, and 
   the negative subpartition is
   a partition into distinct odd parts, all less than $2j$.  Thus $\mathcal{S}_{n,j}$ is
   the set of those signed partitions enumerated by $C(n)$ in Theorem~\ref{Andrews} which
   have exactly $j$ parts.
 
 \section{A bijection between ordinary and signed G\"ollnitz-Gordon partitions}
 
 \begin{thm} The map
 \[ g: \mathcal{G}_{n,j} \to \mathcal{S}_{n,j} \]
 given by
 
 \[ (\gamma_1, \gamma_2, \dots, \gamma_j ) \overset{g}{\mapsto } 
 \Big(  (\pi_1, \pi_2, \dots, \pi_j), \langle 1^{f_1} 3^{f_3} \cdots (2j-1)^{f_{2j-1}}  \rangle  \Big) 
 \]
 where \[ \pi_k = \gamma_k + 4k - 2j -2 + P(\gamma_k) + 2\sum_{i=k+1}^j P(\gamma_i) \]
 and \[ f_{2k-1} = P(\gamma_k) \]
 is a bijection.
\end{thm}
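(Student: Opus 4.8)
The plan is to show $g$ is a bijection by exhibiting its two-sided inverse explicitly; everything then comes down to checking that $g$ and the candidate inverse each land in the asserted target set. (We may assume $j\geqq 1$, the case $j=0$ being trivial.) For $\sigma=(\pi,\nu)\in\mathcal{S}_{n,j}$, write $\nu=\langle 1^{f_1}3^{f_3}\cdots(2j-1)^{f_{2j-1}}\rangle$, where each $f_{2k-1}\in\{0,1\}$ since the negative parts are distinct odd numbers $\leqq 2j-1$, and define $g^{-1}(\sigma)=(\gamma_1,\dots,\gamma_j)$ by
\[
 \gamma_k := \pi_k - 4k + 2j + 2 - f_{2k-1} - 2\sum_{i=k+1}^j f_{2i-1},\qquad 1\leqq k\leqq j ;
\]
this is just the defining relation $\pi_k=\gamma_k+4k-2j-2+P(\gamma_k)+2\sum_{i=k+1}^j P(\gamma_i)$ solved for $\gamma_k$, after one checks (below) that $P(\gamma_k)=f_{2k-1}$. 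Because the defining relations for $g$ and for $g^{-1}$ are literally the same system of equations relating $\gamma_k$, $\pi_k$, and the parities $P(\gamma_i)=f_{2i-1}$, merely solved for different unknowns, once both maps are known to be well defined the compositions $g\circ g^{-1}$ and $g^{-1}\circ g$ are automatically the identity. So the proof reduces to two well-definedness verifications.

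For the forward direction, the parity and boundedness facts about $\nu$ are immediate: each $\pi_k$ is even because $\gamma_k+P(\gamma_k)$ is even and the other summands are even; the negative subpartition consists of the distinct odd numbers $2k-1$ with $\gamma_k$ odd, each $\leqq 2j-1$, and there are at most $j$ of them, so $\ell(\nu)\leqq j$. Weight preservation $|\pi|-|\nu|=n$ is the computation $\sum_k(4k-2j-2)=0$ together with $\sum_k P(\gamma_k)+2\sum_k\sum_{i>k}P(\gamma_i)=\sum_{i:\ \gamma_i\text{ odd}}(2i-1)=|\nu|$, giving $\sum_k\pi_k=\sum_k\gamma_k+|\nu|$. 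The one substantive point is $\pi_k\geqq 2j$. I would first note $\pi_j=\gamma_j+2j-2+P(\gamma_j)\geqq 2j$, since $\gamma_j\geqq 1$ forces $\gamma_j+P(\gamma_j)\geqq 2$, and then show $\pi$ is weakly decreasing via
\[
 \pi_k-\pi_{k+1}=(\gamma_k-\gamma_{k+1})-4+P(\gamma_k)+P(\gamma_{k+1})
\]
together with the elementary claim that $(\gamma_k-\gamma_{k+1})+P(\gamma_k)+P(\gamma_{k+1})\geqq 4$ for consecutive parts of a partition in $\mathcal{G}_{n,j}$; this is a two-line case analysis on the parity of $\gamma_k$ (if $\gamma_k$ is odd the gap is $\geqq 2$, with equality only when $\gamma_{k+1}$ is odd; if $\gamma_k$ is even the gap is $\geqq 3$, with equality only when $\gamma_{k+1}$ is odd). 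From $\pi_j\geqq 2j$ and monotonicity, $\pi_k\geqq 2j$ for all $k$, so $g(\gamma)\in\mathcal{S}_{n,j}$.

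For the inverse direction, reducing the displayed formula for $\gamma_k$ modulo $2$ gives $\gamma_k\equiv f_{2k-1}$, so $P(\gamma_k)=f_{2k-1}$ and the substitution is legitimate; the mirror image of the weight computation gives $\sum_k\gamma_k=|\pi|-|\nu|=n$. Positivity $\gamma_k\geqq 1$ follows from the hypothesis $\pi_k\geqq 2j$ by the crude bound $\gamma_k\geqq 2j-4k+2j+2-1-2(j-k)=2j-2k+1\geqq 1$. Finally, the gap conditions come from the same identity, now read as $\gamma_k-\gamma_{k+1}=(\pi_k-\pi_{k+1})+4-f_{2k-1}-f_{2k+1}$ for $1\leqq k\leqq j-1$: since $\pi_k\geqq\pi_{k+1}$ this is $\geqq 2$, and is $\geqq 3$ whenever $\gamma_k$ is even (in which case $f_{2k-1}=0$). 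Hence $g^{-1}(\sigma)\in\mathcal{G}_{n,j}$, and $g$ and $g^{-1}$ are mutually inverse. The main obstacle is the interplay of parities with the gap conditions — pinning down the inequality $(\gamma_k-\gamma_{k+1})+P(\gamma_k)+P(\gamma_{k+1})\geqq 4$ and its dual use to control both the monotonicity of $\pi$ and the gaps of $g^{-1}(\sigma)$ — together with the mild care that in the inverse direction this is needed only for $k\leqq j-1$, so that no out-of-range exponent $f_{2j+1}$ is ever invoked; the rest is bookkeeping.
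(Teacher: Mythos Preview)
Your proof is correct and follows essentially the same route as the paper: verify that $g$ lands in $\mathcal{S}_{n,j}$ via the same weight, parity, monotonicity, and lower-bound checks, then exhibit the explicit inverse $h(\sigma)_k=\pi_k-4k+2j+2-f_{2k-1}-2\sum_{i>k}f_{2i-1}$. You are in fact slightly more thorough than the paper, which simply asserts that ``direct computation'' shows $g\circ h$ and $h\circ g$ are identities, whereas you explicitly verify that $h(\sigma)\in\mathcal{G}_{n,j}$ (positivity and the gap conditions); you also compute $\pi_k-\pi_{k+1}=(\gamma_k-\gamma_{k+1})-4+P(\gamma_k)+P(\gamma_{k+1})$ with the correct sign on $P(\gamma_{k+1})$, matching the paper's case analysis rather than its (typo-afflicted) displayed formula.
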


\begin{proof} Suppose that $\gamma\in\mathcal{G}_{n,j}$ and that the image of $\gamma$
under $g$
is the signed partition $\sigma=(\pi,\nu)$.
  \begin{claim}\label{c0} $|\sigma| = |\pi|-|\nu|  = n$. \end{claim}
  \begin{proof}[Proof of Claim~\ref{c0}]
    \begin{align*}
        |\pi| - |\nu| &= \sum_{k=1}^j \left(\gamma_k + 4k - 2j - 2 + P(\gamma_k) + 
           2\sum_{i=k+1}^j P(\gamma_i) \right) \\
           &\qquad - \left( \sum_{h=1}^j (2h-1) P(\gamma_h) \right)\\
       &= \left( \sum_{k=1}^j \gamma_k \right) + 4\frac{ j(j+1)}{2} - 2j^2 - 2j 
          + \sum_{k=1}^j P(\gamma_k)\\
      &\qquad  + 2 \sum_{k=1}^j \sum_{i=k+1}^j P(\gamma_i) - \left( \sum_{h=1}^j (2h-1) P(\gamma_h) \right)\\
      &= n -\sum_{h=1}^j (2h-2) P(\gamma_h) + 2 \sum_{i=1}^j (h-1) P(\gamma_h) \\
      &=n
    \end{align*}
  \end{proof}
  \begin{claim} \label{c1} $\pi_1 \geqq \pi_2 \geqq \dots \geqq \pi_j$. \end{claim}
  \begin{proof}[Proof of Claim~\ref{c1}]  
       Fix $k$ with $1\leqq k < j$.
      \begin{align*}  
         \pi_k - \pi_{k+1} &=  \gamma_k + 4k - 2j -2 + P(\gamma_k) + 2\sum_{i=k+1}^j P(\gamma_i)\\
          & \hskip 3mm 
          -\left( \gamma_{k+1} + 4(k+1) - 2j -2 + P(\gamma_{k+1}) + 2\sum_{i=k+2}^j P(\gamma_i) \right)\\
          &= \gamma_k - \gamma_{k+1} + P(\gamma_k) - P(\gamma_{k+1}) - 4.
      \end{align*}
      The minimum value of $\gamma_k - \gamma_{k+1}$ varies depending on the parities of
      $\gamma_k$ and $\gamma_{k+1}$.
      \begin{itemize}
         \item If $\gamma_k \equiv \gamma_{k+1} \equiv 0 \pmod{2}$, then 
         $$(\gamma_k - \gamma_{k+1}) + P(\gamma_k) - P(\gamma_{k+1}) - 4 \geqq 4+ 0 + 0 -4=0.$$
          \item If $\gamma_k \equiv 1\pmod{2}$ and $ \gamma_{k+1} \equiv 0 \pmod{2}$, then 
           $$(\gamma_k - \gamma_{k+1}) + P(\gamma_k) - P(\gamma_{k+1}) - 4 \geqq 3+ 1 + 0 -4=0.$$
         \item If $\gamma_k \equiv 0\pmod{2}$ and $ \gamma_{k+1} \equiv 1 \pmod{2}$, then 
           $$(\gamma_k - \gamma_{k+1}) + P(\gamma_k) - P(\gamma_{k+1}) - 4 \geqq 3+ 0 + 01 -4=0.$$
           \item If $\gamma_k \equiv \gamma_{k+1} \equiv 1 \pmod{2}$, then 
         $$(\gamma_k - \gamma_{k+1}) + P(\gamma_k) - P(\gamma_{k+1}) - 4 \geqq 2+ 1+ 1 -4=0.$$
         \end{itemize}
      \end{proof}      
      \begin{claim}\label{c2} All of the $\pi_k$ are at least $2j$. \end{claim}
        \begin{proof}[Proof of Claim~\ref{c2}]  By Claim~\ref{c1}, it is sufficient to show that $\pi_j\geqq 2j$.  
         
           If $\gamma_j=1$, then 
         \begin{align*} 
         \pi_j &= \gamma_j + 4j - 2j -2 + P(\gamma_j) + 2\sum_{i=j+1}^j P(\gamma_i) \\
         &= \gamma_j +2j - 2 + 1 \\ 
         & \geqq 1+2j-2+1\\ 
         &=2j. 
         \end{align*} 
         Otherwise $\gamma_j\geqq 2$, and so
             \begin{align*} 
         \pi_j &= \gamma_j + 4j - 2j -2 + P(\gamma_j) + 2\sum_{i=j+1}^j P(\gamma_i) \\
         &= \gamma_j +2j - 2 + 1 \\ 
         & \geqq 2+2j-2+0\\ 
         &=2j. 
         \end{align*} 
        \end{proof}
        
     \begin{claim}\label{c3}  All parts of $\pi$ are even.\end{claim}
     \begin{proof}[Proof of Claim~\ref{c3}]  
       \begin{align*}  
         \pi_k - \pi_{k+1} &=  \gamma_k + 4k - 2j -2 + P(\gamma_k) + 2\sum_{i=k+1}^j P(\gamma_i)\\
          &\equiv \gamma_k + P(\gamma_k) \\
          &\equiv 0 \pmod {2}.
      \end{align*}
      \end{proof}  
      
      \begin{claim}\label{c4} All parts of $\nu$ are distinct, odd, and at most $2j-1$.\end{claim}
      \begin{proof}[Proof of Claim~\ref{c4}]  Claim~\ref{c4} is clear from the definition of $g$
      together with the observation that $P(\gamma_i)\in \{0,1 \}$ for any $i$. 
      \end{proof}
      
      \begin{claim}\label{c5} The map $g$ is invertible.
      \begin{proof}[Proof of Claim~\ref{c5}]
        Let  \[ h: \mathcal{S}_{n,j} \to \mathcal{G}_{n,j} \] be given by
         \[  \Big( (\pi_1, \pi_2, \dots, \pi_j) , \langle  1^{f_1} 3^{f_3} \cdots (2j-1)^{f_{2j-1}} \rangle \Big) 
         \overset{h}{\mapsto } 
        (\gamma_1, \gamma_2, \dots, \gamma_j)
 \]
 where \[ \gamma_k = \pi_k - 4k + 2j + 2 - f_{2k-1} - 2\sum_{i=k+1}^j f_{2i-1} \]
   for $1\leqq k \leqq j$.
   Direct computation shows that $h(g(\gamma)) = \gamma$ for all $\gamma\in\mathcal{G}_{n,j}$,
   and $g(h(\sigma)) = \sigma$ for all $\sigma\in\mathcal{S}_{n,j}$.  Thus $h$ is the inverse of $g$.
      \end{proof}
      \end{claim}
    Hence, by the above claims $g$ is a bijection.
\end{proof}

\end{document}